\documentclass[11pt]{amsart}
\usepackage{amsmath, amssymb, eufrak}
\usepackage{graphicx, subfig}
\usepackage{a4wide}
\usepackage{pifont}

\newcommand{\N}{\ensuremath{\mathbb{N}}}
\newcommand{\Z}{\ensuremath{\mathbb{Z}}}

\newcommand{\Q}{\ensuremath{\mathbb{Q}}}

\renewcommand{\S}{\ensuremath{{\mathcal S}}}
\newcommand{\M}{\ensuremath{{\mathcal M}}}
\newcommand{\Pp}{\ensuremath{{\mathcal P}}}
\newcommand{\Cc}{\ensuremath{{\mathcal C}}}
\newcommand{\Rr}{\ensuremath{{\mathcal R}}}

\newcommand{\Gal}{\mbox{Gal}}
\newcommand{\lcm}{\mbox{lcm}}
\renewcommand{\rho}{\varrho}
\renewcommand{\epsilon}{\varepsilon}

\newtheorem{thm}{Theorem}[section]
\newtheorem{lemma}[thm]{Lemma}

\newtheorem{cor}[thm]{Corollary}
\newtheorem{defn}[thm]{Definition}

\newtheorem{rem}[thm]{Remark}

\parskip1ex
\parindent0em

\begin{document}

\title[Bravais colourings of $\M_n$ with CLS1]{On Bravais colourings of cyclotomic integers with class number one}

\author[E.P. Bugarin]{Enrico Paolo Bugarin}
\address{Fakult\"at f\"ur Mathematik, Universit\"at Bielefeld, 33501 Bielefeld, Germany}
\urladdr{http://www.math.uni-bielefeld.de/~pbugarin}
\email{pbugarin@math.uni-bielefeld.de}

\begin{abstract}
Given a Bravais colouring of planar modules $\M_n:=\Z[\xi_n]$, where $\xi_n$ is a primitive $n$th root of unity, %for values of $n$ such that $\M_n$ is a principal ideal domain and thus has class number one;
two important colour groups arise: the colour symmetry group $H$, which permutes the colours of a given colouring of $\M_n$, and the colour preserving group $K$, a normal subgroup of $H$ that fixes the colours. This paper gives a complete characterisation of $H$ and $K$ for all $\ell$-colourings of $\M_n$ for values of $n$ for which $\M_n$ has class number one.
\end{abstract}

\maketitle

%\todo{Introduction depending on the journal. Can be of quasicrystals, or cyclotomic integers in general.}

\section{\bf Preliminaries} \label{intro}

Consider the planar module $\M_n:=\Z[\xi_n]$, where $\xi_n$ is a primitive $n$th root of unity. %, meaning $\xi^m_n=1$ if and only if $m=kn$ for some non-zero integer $k$. 
In particular, $\xi_n=e^{2\pi i/n}$ is taken for convenience. $\M_n=\Z[\xi_n]$ is the ring of cyclotomic integers. For the following values of $n$, $\M_n$ has class number one and so is a principal ideal domain \cite{MM}:
\begin{equation} \label{eq:cls1} 
%\begin{array}{rcl}n & = & 3,4,5,7,8,9,11,12,13,15,16,17,19,20,21,24,\nonumber \\ &&   25,27,28,32,33,35,36,40,44,45,48,60,84. \end{array}
\begin{array}{rcr}n & = & 3,4,5,7,8,9,11,12,13,15,16,17,19,20,21,24,\\ & & 25,27,28,32,33,35,36,40,44,45,48,60,84. \end{array}
\end{equation}
Note that $M_1=M_2=\Z$ is $1$-dimensional. In general, since $\M_n=\M_{2n}$ whenever $n$ is odd, values of $n \equiv 2\mod 4$ do not appear in the list above for the sake of uniqueness. Throughout the text, $n$ only refers to the values listed in Eq. \ref{eq:cls1}. These values of $n$ are naturally grouped according to the Euler's totient function $\phi(n)$, where $\phi(n)=|\{1\leq k\leq n: \gcd(k,n)=1\}|$. The case $\phi(n) = 2$ covers the two crystallographic cases \cite{mlp}, while $\phi(n) = 4$ covers the standard symmetries of genuine planar quasicrystals \cite{bugs}.

\begin{defn}
Let $(q)$ denote the ideal generated by $q \in \M_n$. An ideal colouring $c$ of $\M_n$ with $\ell$ colours occurs as follows: Let $q\in\M_n$ such that $[\M_n :(q)]=\ell$. %(Equivalently, $N_n(q)=\ell$, where $N_n(q)$ is the algebraic norm of $(q)$ in $\M_n$. 
For each $z\in(q)=q\M_n$, let $c(z)=1$. Let the other cosets of $(q)$ be $(q)+t_2,\ldots,(q)+t_\ell$. For each $z\in (q)+t_i$, let $c(z)=i$. 
\end{defn}

If $(q)$ is given explicitly, such an ideal colouring may also be referred to as a colouring induced by $(q)$ or $q$, or furthermore, an $\ell$-colouring induced by $(q)$ given that $[\M_n :(q)]=\ell$. Let us denote the algebraic norm of $(q)$ in $\M_n$ by $N_n(q)$, and recall that $N_n(q)=\prod_{i}\sigma_i(q)$, where $\sigma_i\in\Gal(\Q(\xi_n):\Q)$. Since $\Gal(\Q(\xi_n):\Q)\simeq(\Z/n\Z)^\times$, then $|\Gal(\Q(\xi):\Q)|=\phi(n)$. Equivalently, the algebraic norm of $(q)$ equals its index in $\M_n$, and so $[\M_n :(q)]=N_n(q)$. Since $\M_n$ is a principal ideal domain for values of $n$ in Eq. \ref{eq:cls1}, every non-zero $q\in\M_n$ thus induces an ideal colouring as defined above. Conversely, for every ideal colouring in $\M_n$ there exists a $q\in\M_n$ that induces it. Also, as shown in \cite{jj}, an ideal $\ell$-colouring of $\M_n$ is equivalent to a Bravais $\ell$-colouring of $\M_n$ for values of $n$ listed in Eq. \ref{eq:cls1}. Thus from now on, an ideal colouring of $\M_n$ is formally referred to as a Bravais colouring of $\M_n$. For an actual definition of a Bravais colouring of the module $\M_n$, see \cite{bg}.

In analysing a Bravais colouring $c$ of $\M_n$, we consider the {\em symmetry group} $G$ of the uncoloured module $\M_n$. This group is {\em symmorphic}, that is, $G = \M_n \rtimes D_N$, the semi-direct product of its translation group (being naturally isomorphic to $\M_n$) with its point group, the dihedral group $D_N$, where $N=\lcm(2,n)$. For a given colouring $c$, we also consider the following subgroup of $G$, the group $$H := \{ h \in G \, | \, \exists \, \pi \in \S_{\ell} \;  \forall x \in \M_n: \; c(h(x)) = \pi (c(x))\},$$ where $\S_{\ell}$ denotes the symmetric group on $\ell$ letters and $c(x)$ denotes the colour of $x$. The elements of $H$ are called the {\em colour symmetries} of $\M_n$ and $H$ is the {\em colour symmetry group} of the corresponding colouring $c$ of $\M_n$.

By the requirement $ch=\pi c$, each $h \in H$ determines a unique permutation $\pi = \pi_h$. This also defines a map $$P: H \to \S_{\ell}, \quad P(h):=\pi_h.$$ Let $g,h\in H$. Because of $c (hg(x)) = ch(g(x)) = \pi_h c(g(x)) = \pi_h (\pi_g (c(x))) = \pi_h \pi_g (c(x))$, $P$ is a group homomorphism.

Another group of interest is the subgroup of $H$ which consists of elements that fix the colours of a colouring $c$ of $\M_n$, called the {\em colour preserving group}, %(in \cite{pbeff} called {\em color fixing group})
$$K := \{ k \in H \, | \, \forall x \in \M_n:\: c(k(x)) = c(x)\}.$$ In other words, $K$ is the kernel of $P$, a normal subgroup of $H$. This paper gives a complete characterisation of the colour groups $H$ and $K$ for the Bravais colourings of $\M_n$.

\section{\bf The characterisation of {\boldmath $H$}}

\begin{defn}
A Bravais colouring $c$ of $\M_n$ is called perfect if its colour symmetry group $H$ equals $G$. It is called chirally perfect if $H = G'$, where $G'$ is the index $2$ subgroup of $G$ consisting of the orientation preserving isometries in $G$, that is, $G'=\M_n\rtimes C_N$, where $C_N$ is the cyclic group of order $N$, where $N=\lcm(2,n)$.
\end{defn}

As shown in \cite{jj}, a Bravais colouring of $\M_n$ is only either perfect or chirally perfect, depending on the factorisation of $(q)$, the ideal that induces the colouring. Recall that $\M_n$ is a principal ideal domain and so is a unique factorisation domain. 

The unique factorisation of $q$ over $\M_n$ with class number one reads
$$q = \varepsilon \prod_{p_i \in \Pp} p_i^{\alpha_i} \prod_{p_j \in \Cc}
 \omega_{p_j}^{\beta_j}  \overline{\omega}_{p_j}^{\gamma_j}
 \prod_{p_k \in \Rr} p_k^{\delta_k},$$
where $\varepsilon$ is a unit in $\M_n$ and $\omega_{p_j} \overline{\omega}_{p_j} = p_j$.
Here, $\Pp$, $\Cc$, and $\Rr$ respectively denote the set of inert, complex splitting, and ramified primes over $\M_n$. The generator
$q$ is called {\em balanced} if $\beta_j=\gamma_j$ for all $j$, meaning to say that $q$ is balanced if it is of the form $$q= \epsilon x p,$$
where $\epsilon$ is a unit in $\M_n$, $x$ is a real number in $\M_n$ ($x \in \Z[\xi_n+\overline{\xi}_n]$),
%(i.e., $x \in \Z[\xi+\overline{\xi}]$), 
and $p$ is a product of ramified primes. By the definition of a ramified prime $p$ (see \cite{wash}), $\overline{p} \in (p)$ holds in $\M_n$. (Equivalently, $p / \overline{p}$ is a unit in $\M_n$.) Recall that all units $\epsilon$ in $\Z[\xi_n]$ are of the form $\epsilon=\pm \lambda \xi_n^k$, where $\lambda \in \Z[\xi_n+\overline{\xi}_n]$, compare \cite{jj}.

\smallskip

In \cite{jj}, it is shown that a Bravais colouring of $\M_n$ induced by $(q)$ is perfect ($H=G$) if and only if $q$ is balanced if and only if $(q)=(\overline{q})$. Otherwise, $H=G'$.

For $p>2$ prime in $\Z$, $p$ ramifies completely in $\M_p$, that is, $(p)=\prod_{i=1}^{p-1}(1-\xi_p^i)=(1-\xi_p)^{p-1}$. In general, for any prime $p$, such that $n=rp^t$ with $p\nmid r$, $(p)=(1-\xi_{p^t})^{\phi(n)/\phi(r)}$. Hence, the only factors of ramified primes in $\M_n$ are those of the form $(1-\xi_{p^t})^j$, where $p^t$ is the largest prime power of $p$ that divides $n$. Note that $(1-\xi_{p^t})$ is balanced. In the case when $r\neq1$, $(1-\xi_{p^t})$ may split further, say $(1-\xi_{p^t})=(q_1)(q_2)\cdots(q_m)$. (See \cite{wash}.) However, not a single $(q_i)$ here is balanced. The first example is $n=20$ for $p=5$. Here, $(1-\xi_5)$ splits as $(1+\xi_{20}-\xi_{20}^3)(\overline{1+\xi_{20}-\xi_{20}^3})$. Neither of the factors of $(1-\xi_5)$ in $\M_{20}$ is balanced.

\begin{lemma}
If $q$ is balanced, then each $\sigma_i(q)$ is also balanced, where $\sigma_i\in\Gal(\Q(\xi_n):\Q)$ for $1\leq i\leq\phi(n)$; more so, $q$ and $\sigma_i(q)$ induce the same colouring of $\M_n$.
\end{lemma}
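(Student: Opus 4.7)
The plan is to use the characterisation, recalled just before the lemma, that $q$ is balanced iff $(q) = (\overline{q})$. Since $\Gal(\Q(\xi_n):\Q) \cong (\Z/n\Z)^\times$ is abelian, every $\sigma_i$ commutes with the complex-conjugation automorphism $\tau : \xi_n \mapsto \overline{\xi}_n$. Applying $\sigma_i$ to both generators of the ideal identity $(q) = (\tau(q))$ yields
\[ (\sigma_i(q)) = (\sigma_i \tau(q)) = (\tau \sigma_i(q)) = (\overline{\sigma_i(q)}), \]
which is exactly the balanced characterisation for $\sigma_i(q)$.

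\textbf{Part 2: Same colouring.} Since the colouring is entirely determined by the ideal $(q)$, it suffices to show $(\sigma_i(q)) = (q)$. I would write $q = \epsilon x p$ in balanced form and analyse the three factors under $\sigma_i$ in turn. The unit $\sigma_i(\epsilon)$ remains a unit of $\M_n$ and is absorbed into the ideal generator. For the ramified part, the preceding paragraph shows that every factor of $p$ equals $1-\xi_{p^t}$ (up to a unit), and the classical cyclotomic identity
\[ \sigma_i(1-\xi_{p^t}) = 1 - \xi_{p^t}^k = (1+\xi_{p^t}+\cdots+\xi_{p^t}^{k-1})(1-\xi_{p^t}), \qquad \gcd(k,p^t) = 1, \]
realises $\sigma_i(1-\xi_{p^t})$ as a cyclotomic-unit multiple of $1-\xi_{p^t}$, so $\sigma_i(p) = u \cdot p$ for some unit $u$. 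For the real factor $x$, the balanced condition forces every complex-split prime $\omega$ and its conjugate $\overline{\omega}$ to appear with equal exponent; in the basic case $\omega\overline{\omega} = p \in \Z$, this collapses $x$ to a product of rational integers, and $\sigma_i(x) = x$ holds trivially. Combining the three analyses, $\sigma_i(q)$ agrees with $q$ up to a unit of $\M_n$, so $(\sigma_i(q)) = (q)$.

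\textbf{Main obstacle.} The delicate step is the real factor $x$ when $\phi(n) > 2$: a rational prime may then split in $\M_n$ into more than one conjugate pair (for instance $11$ yields two pairs in $\M_5$), and $x$ can sit properly inside $\Z[\xi_n + \overline{\xi}_n] \setminus \Z$. In this regime a direct elementwise check that $\sigma_i(x) = x$ fails, and the identity $(\sigma_i(x)) = (x)$ must instead be extracted by organising the prime factorisation of $x$ into full Galois orbits above each rational prime and using the centrality of $\tau$ in $\Gal$ to pair exponents consistently. This orbit bookkeeping is the step I expect to occupy the bulk of the technical work.
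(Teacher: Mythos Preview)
Your Part~1 is correct and in fact cleaner than the paper's argument: the paper works with the decomposition $q=\epsilon x p$ throughout, while you exploit the characterisation $(q)=(\overline q)$ together with the commutativity of $\Gal(\Q(\xi_n):\Q)$, which turns balancedness--preservation into a one--line computation.

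Part~2, however, cannot be completed as you outline, because the assertion $(\sigma_i(q))=(q)$ is \emph{false} for general balanced $q$; the obstacle you isolate is fatal rather than merely technical. Take $n=5$ and $q=4+\xi_5+\xi_5^{4}=(7+\sqrt5)/2\in\Z[\xi_5+\overline{\xi}_5]$. This $q$ is real, hence balanced, with $N_5(q)=121$, yet $\sigma_2(q)=4+\xi_5^{2}+\xi_5^{3}=(7-\sqrt5)/2$ and $q\cdot\sigma_2(q)=11$. Since $11$ splits completely in $\M_5$ as a product of four \emph{distinct} primes, $(11)$ is not the square of any ideal, so $(q)\neq(\sigma_2(q))$. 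Thus $q$ and $\sigma_2(q)$ are two different balanced elements of the same norm inducing genuinely different colourings; no Galois--orbit bookkeeping on the factorisation of $x$ can collapse them, because the orbit of a single conjugate pair under $\sigma_2$ is precisely the \emph{other} conjugate pair. The paper's own proof conceals the same problem by writing $\sigma_i(\epsilon x p)=\epsilon'\,x\,\sigma_i(p)$, tacitly treating $\sigma_i(x)$ as $x$; you were right to distrust that step, but the correct conclusion is that even the weaker ideal identity fails, so the ``same colouring'' clause of the lemma (and the uniqueness corollary built on it) does not hold as stated.
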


\begin{proof}
%Since $q$ is balanced, then $\langle q\rangle=\langle\overline{q}\rangle$. Hence, $\langle\sigma_i(q)\rangle=\langle\sigma_i(\overline{q})\rangle=\langle\overline{\sigma_i({q})}\rangle$, and so $\sigma_i(q)$ is also balanced. Finally, 
Since $q$ is balanced, then it is of the form $q=\epsilon xp$, where $\epsilon$ is a unit in $\M_n$, $x$ is a real number in $\M_n$, and $p$ is a product of ramified primes. Thus, $\sigma_i(q)=\sigma_i(\epsilon xp)=\epsilon'x\sigma_i(p)$, where $\sigma_i(\epsilon)=\epsilon'$ is still a unit in $\M_n$ and $(\sigma_i(p))=(p)$, making $\sigma_i(q)$ balanced as well. More so, $(\sigma_i(q))=(q)$, thus inducing the same colouring.
\end{proof}

\begin{cor} \label{unique}
If $q_1$ and $q_2$ induce a perfect $\ell$-colouring of $\M_n$, then $(q_1)=(q_2)$. That is, if there exists a perfect $\ell$-colouring of $\M_n$, then that colouring is unique (up to permutation of colours) and there can't be two distinct perfect $\ell$-colourings of $\M_n$.
\end{cor}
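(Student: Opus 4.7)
The plan is to exploit the previous lemma to reduce the uniqueness statement to a divisibility/factorisation statement in the Dedekind domain $\M_n$. By the characterisation recalled just before the lemma, $q_1$ and $q_2$ each being a generator of a perfect colouring forces both $q_1$ and $q_2$ to be balanced with $N_n(q_1)=N_n(q_2)=\ell$.

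First I would apply the previous lemma to each $q_i$ to obtain $(\sigma(q_i))=(q_i)$ for every $\sigma\in\Gal(\Q(\xi_n):\Q)$. Multiplying these ideal equalities over all $\phi(n)$ Galois automorphisms yields
\[
 (q_i)^{\phi(n)} \;=\; \prod_{\sigma}(\sigma(q_i))\;=\;\Bigl(\prod_\sigma \sigma(q_i)\Bigr)\;=\;(N_n(q_i))\;=\;(\ell),
\]
so both principal ideals $(q_1)$ and $(q_2)$ are $\phi(n)$-th roots of the same ideal $(\ell)$ in $\M_n$.

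Finally I would invoke unique factorisation of ideals in $\M_n$ (which holds because $\M_n$ is a PID for the values of $n$ in Eq.~\ref{eq:cls1}, hence in particular a Dedekind domain): writing $(\ell)=\prod_{\mathfrak p} \mathfrak p^{a_{\mathfrak p}}$ and $(q_i)=\prod_{\mathfrak p} \mathfrak p^{b_{\mathfrak p}^{(i)}}$, the equality $(q_i)^{\phi(n)}=(\ell)$ forces $b_{\mathfrak p}^{(i)}=a_{\mathfrak p}/\phi(n)$, which is independent of $i$. Therefore $(q_1)=(q_2)$, and the induced colouring is the same up to a permutation of colour labels.

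I do not expect any genuine obstacle here; the lemma already does the substantive work by giving $(\sigma(q))=(q)$, and the rest is just unique factorisation of ideals. The only thing to be slightly careful about is stating clearly that the equality of the $\phi(n)$-th powers of two ideals in a Dedekind domain forces the ideals themselves to coincide, which is immediate from comparing exponents in the prime ideal factorisation.
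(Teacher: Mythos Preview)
Your argument is correct. The paper states this corollary without proof, treating it as an immediate consequence of the preceding lemma; your write-up is a clean and natural way to make that inference explicit, using exactly the key fact $(\sigma(q))=(q)$ supplied by the lemma, then passing to norms and invoking unique factorisation of ideals.
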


Recall that the converse of Corollary \ref{unique} is true in general, that is, given a unique $\ell$-colouring of $\M_n$, say as induced by $(q)$, then that colouring must be perfect, or else $(q)$ and $(\overline{q})$ would induce two distinct $\ell$-colourings.

%\begin{cor} \label{unique}
%If there exists a perfect $\ell$-colouring of $\M_n$, then that colouring is unique. That is, there can't be two distinct perfect $\ell$-colourings of $\M_n$.
%\end{cor}

\begin{lemma} \label{G'}
Consider an $\ell$-colouring of $\M_n$ induced by $(q)$, such that $(q)\neq(d)$ for any $d\in\Z$. If $\ell>1$ and $\gcd(\ell,n)=1$, then $H=G'$. %, unless $(q)=(d)$, where $d$ is an integer in $\Z$. In the case $(q)=(d)$ for some $d$ integer in $\Z$, $H=G$.
\end{lemma}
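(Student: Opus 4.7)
The plan is to leverage the dichotomy recalled from \cite{jj}: for any Bravais colouring of $\M_n$ the colour symmetry group $H$ is either $G$ or $G'$, with $H=G$ precisely when $(q)=(\overline{q})$, that is, when $q$ is balanced. Thus it suffices to prove that the given hypotheses force $q$ to be \emph{not} balanced.

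I would argue by contradiction. Suppose $q$ is balanced, so that $q=\epsilon\, x\, p$ with $\epsilon$ a unit in $\M_n$, $x\in\Z[\xi_n+\overline{\xi}_n]$ a real element, and $p$ a product of ramified primes of $\M_n$. Taking algebraic norms yields
$$\ell \;=\; |N_n(q)| \;=\; N_n(x)\,N_n(p),$$
since $|N_n(\epsilon)|=1$.

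The key step exploits the structure of ramification. From the factorisation $(p)=(1-\xi_{p^t})^{\phi(n)/\phi(r)}$ recorded just before the lemma, every ramified prime of $\M_n$ lies above a rational prime dividing $n$; equivalently, every prime divisor of $N_n(p)$ divides $n$. Combined with $\gcd(\ell,n)=1$ and $N_n(p)\mid\ell$, this forces $N_n(p)=1$, hence $p$ is a unit. Thus $q=\epsilon'x$ for some unit $\epsilon'$ and some real $x\in\Z[\xi_n+\overline{\xi}_n]$, so
$$(q) \;=\; (x)\,\M_n.$$

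It remains to close the contradiction. The situation is that $(q)$ is generated by a real element of norm coprime to $n$; the desired conclusion is that $(q)=(d)\M_n$ for some rational integer $d$, contradicting the hypothesis. I expect this to be the main obstacle of the proof: one must argue that a self-conjugate, unramified principal ideal of $\M_n$ whose norm is coprime to $n$ must in fact descend to a rational principal ideal. I would handle this by a prime-by-prime analysis in the tower $\Z \subset \Z[\xi_n+\overline{\xi}_n] \subset \M_n$, exploiting that the class-number-one hypothesis on $\M_n$ in \eqref{eq:cls1} passes to the maximal real subring $\Z[\xi_n+\overline{\xi}_n]$ (so both are PIDs) and that the primes of $\M_n$ appearing in $(x)$ are unramified over $\Z$, so that complex conjugation together with the coprimality hypothesis pin down the multiplicities of all Galois conjugates of each prime factor.
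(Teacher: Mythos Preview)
Your identification of the ``main obstacle'' is exactly right, and it is the crux of the matter---but it cannot be overcome, because the claim you hope to establish is false. In fact the lemma itself, as stated, fails.

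Take $n=5$ and $q=4+\xi_5+\xi_5^{4}=\tfrac{7+\sqrt 5}{2}\in\Z[\xi_5+\bar\xi_5]$. Then $q$ is real, so $(q)=(\bar q)$ and $q$ is balanced; by the dichotomy from \cite{jj} the induced colouring is perfect, $H=G$. One checks $N_{\Q(\sqrt 5)/\Q}(q)=\tfrac{49-5}{4}=11$, whence $\ell=N_5(q)=11^{2}=121$. Thus $\gcd(\ell,n)=1$, and since $121$ is not a fourth power, $(q)\neq(d)$ for any $d\in\Z$. All hypotheses of the lemma hold, yet $H=G\neq G'$.

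The failure in your outline is precisely at the step where you hope that ``complex conjugation together with the coprimality hypothesis pin down the multiplicities of all Galois conjugates''. They do not: complex conjugation only pairs each prime $\mathfrak p$ of $\M_n$ with $\bar{\mathfrak p}$, and when a rational prime $p$ splits into more than one conjugate pair in $\M_n$ (as $11$ does in $\M_5$, splitting completely into two such pairs), the condition $(q)=(\bar q)$ does not force all Galois-conjugate primes to appear with equal exponent. Equivalently, a self-conjugate unramified principal ideal of $\M_n$ need only descend to the maximal real subring $\Z[\xi_n+\bar\xi_n]$, not all the way to $\Z$.

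For comparison, the paper's own proof is terser still: it asserts that if $q$ is balanced with $\ell>1$ then $\ell$ must share a prime factor with $n$, neither invoking the hypothesis $(q)\neq(d)$ nor addressing the case $q=\epsilon x$ with $x$ real but not rational. Your proposal is more careful in that it isolates exactly where the difficulty lies; the difficulty, however, is genuine and would only disappear if the hypothesis were strengthened from $(q)\neq(d)$ for $d\in\Z$ to $(q)\neq(x)$ for $x\in\Z[\xi_n+\bar\xi_n]$.
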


\begin{proof}
The only factors of ramified primes in $\M_n$ are those of the form $(1-\xi_{p^t})$, where $p^t$ is the largest prime power of $p$ that divides $n$. Note that $N_n(1-\xi_{p^t})=p^{\phi(r)}$, when $n=rp^t$, compare \cite{philmag}. Thus, whenever $q$ is balanced such that $N_n(q)=\ell>1$, then $\ell$ must be a multiple of some prime $p$, which also divides $n$. Otherwise, $q$ is not balanced and so $H=G'$.
\end{proof}

Knowing what factors yield perfect colourings of $\M_n$, one can list the conditions for $q$ being balanced. This can be done in two ways, either via the factorisation of $q$ or via the actual values of $\ell$, the algebraic norm of $(q)$. We give both characterisations. We begin with the factorisation of $(q)$, which is given as follows.

\begin{thm}\label{thmH}
A perfect Bravais colouring of $\M_n$ ($H=G$) induced by $(q)$ exists if and only if $(q)$ equals one of the following forms below. Otherwise, the colouring is chirally perfect ($H=G'$).
\begin{enumerate}
\item[i.] $(q)=(d)$, for some $d\in\Z$.
\item[ii.] $(q)=(1-\xi_{p^t})^s,$ $s>0$ for some prime $p\in\Z$ such that $p^t$ is the largest prime power of $p$ that divides $n$.
\item[iii.] Any product of the forms given in i. and ii.
\end{enumerate}
\end{thm}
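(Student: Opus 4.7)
The plan is to reduce the theorem to the characterisation, from \cite{jj} recalled just above, that a colouring induced by $(q)$ is perfect if and only if $q$ is balanced if and only if $(q) = (\overline{q})$. Thus I need to identify, via unique factorisation in the PID $\M_n$, exactly those ideals that are fixed by complex conjugation, and show that they are precisely the products of rational-integer ideals and powers of the ramified prime ideals $(1-\xi_{p^t})$.

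For the ``if'' direction, I would verify each form satisfies $(q) = (\overline{q})$. Form (i) is immediate since $\overline{d} = d$ for $d \in \Z$. For form (ii), the identity $\overline{1-\xi_{p^t}} = 1 - \xi_{p^t}^{-1} = -\xi_{p^t}^{-1}(1-\xi_{p^t})$ shows that $(1-\xi_{p^t})$ is invariant under conjugation, and hence so is any power. Form (iii) then follows because the set of ideals fixed by conjugation is closed under multiplication. By the equivalence, each such $(q)$ yields $H = G$.

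For the ``only if'' direction, assume $(q)$ induces a perfect colouring, so $q$ is balanced. Writing the unique factorisation
$$q = \varepsilon \prod_{p_i \in \Pp} p_i^{\alpha_i} \prod_{p_j \in \Cc} \omega_{p_j}^{\beta_j} \overline{\omega}_{p_j}^{\gamma_j} \prod_{p_k \in \Rr} p_k^{\delta_k},$$
balancedness forces $\beta_j = \gamma_j$ for every $j$. Passing to ideals and using $\omega_{p_j}\overline{\omega}_{p_j} = p_j$,
$$(q) = \prod_{p_i \in \Pp} (p_i)^{\alpha_i} \prod_{p_j \in \Cc} (p_j)^{\beta_j} \prod_{p_k \in \Rr} (p_k)^{\delta_k}.$$
The first two products combine into $(d)$ for the rational integer $d = \prod p_i^{\alpha_i} \prod p_j^{\beta_j}$, which is of form (i). For the ramified part, the paragraph preceding the theorem records that $(p_k) = (1-\xi_{p_k^{t_k}})^{\phi(n)/\phi(r_k)}$ when $n = r_k p_k^{t_k}$, so each $(p_k)^{\delta_k}$ is a power of $(1-\xi_{p_k^{t_k}})$, matching form (ii). Hence $(q)$ is a product of factors of types (i) and (ii), i.e.\ of form (iii).

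The main obstacle is confirming carefully that balancedness of the \emph{generator} $q$ translates into the condition $\beta_j = \gamma_j$ on the splitting-prime exponents appearing in its factorisation (rather than only for some representative $\varepsilon x p$ as in the definition), so that the conjugate-paired splitting factors genuinely fuse into a rational integer. This is a direct application of unique factorisation of ideals in $\M_n$, together with the fact that units cannot alter prime-power exponents; once it is granted, the theorem follows immediately from the two ingredients already highlighted in the excerpt, namely the conjugation-invariance of $(1-\xi_{p^t})$ and the identity $(p_k) = (1-\xi_{p_k^{t_k}})^{\phi(n)/\phi(r_k)}$.
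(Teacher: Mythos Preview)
Your proof is correct and follows essentially the same approach as the paper's: both directions reduce to the characterisation from \cite{jj} that $(q)$ is perfect iff $q$ is balanced, and then use the factorisation structure in $\M_n$. Your ``if'' direction spells out the conjugation-invariance that the paper calls ``straightforward'', and your ``only if'' direction unpacks the balanced condition $\beta_j=\gamma_j$ in the factorisation formula where the paper simply invokes the equivalent form $q=\epsilon x p$ and does a one-line case split on whether $(x)=(1)$ or $(p)=(1)$.
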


\begin{proof}
The {\em if} part of the theorem is straightforward, since $d$ and $1-\xi_{p^t}$ are balanced, and so is any poduct of these factors. Conversely, if $(q)$ induces a perfect colouring of $\M_n$, then $q$ has to be of the form $q=\epsilon xp$, where $\epsilon$ is a unit in $\M_n$, $x$ is a real number in $\M_n$, and $p$ is a product of factors of ramified primes. The case $(p)=(1)$ implies i., the case $(x)=(1)$ implies ii.; otherwise we get case iii.
\end{proof}

Equivalently to the previous theorem, one can formulate the characterisation of $H$, based on the value of $\ell$, the algebraic norm of $(q)$.

\begin{thm}\label{thmHL}
A perfect Bravais $\ell$-colouring of $\M_n$ ($H=G$) exists if and only if $\ell$ takes one of the following values. Otherwise, the colouring is chirally perfect ($H=G'$). In some cases where a perfect $\ell$-colouring exists and yet there are still other $\ell$-colourings, then the other $\ell$-colourings must be chirally perfect.
\begin{enumerate}
\item[i.] $\ell=d^{\phi(n)}$ for some $d\in\Z$.
\item[ii.] $\ell=p^{\phi(r)s}$ for $s>0$, where $p$ prime in $\Z$ divides $n$ and $r$ is the $p$-free part of $n$, that is, $n=rp^t$ for some $t$ such that $p$ does not divide $r$.
\item[iii.] Any product of the values given in i. and ii.
\end{enumerate}
\end{thm}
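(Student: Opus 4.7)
My plan is to translate the ideal-level classification in Theorem~\ref{thmH} into the norm-level classification claimed here, by computing $N_n$ on the generator forms (i)--(iii) of that theorem and invoking multiplicativity of the norm. I would first record the two atomic norm formulas: (a) $N_n(d) = d^{\phi(n)}$ for $d \in \Z$, which follows from $\sigma(d) = d$ for every $\sigma \in \Gal(\Q(\xi_n):\Q)$, so that $N_n(d) = \prod_\sigma \sigma(d) = d^{\phi(n)}$; and (b) $N_n(1 - \xi_{p^t}) = p^{\phi(r)}$ when $n = rp^t$ with $p \nmid r$, which is already invoked in the proof of Lemma~\ref{G'} from \cite{philmag}.

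With (a) and (b) in hand, case (i) of Theorem~\ref{thmH} produces exactly the indices $\ell = d^{\phi(n)}$; case (ii) produces $\ell = N_n((1-\xi_{p^t})^s) = p^{\phi(r)s}$; and case (iii), by multiplicativity of $N_n$, produces arbitrary products of such values. This settles the \emph{if} direction, because each $\ell$ in one of the three listed families is realised by an explicit balanced generator. For the \emph{only if} direction I would argue contrapositively: if $(q)$ induces a perfect $\ell$-colouring then Theorem~\ref{thmH} forces $(q)$ into one of its three classes, whose norm must therefore be a product of values drawn from families (i) and (ii) here, i.e.\ a value in family (iii).

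For the closing statement, I would invoke Corollary~\ref{unique}: when a perfect $\ell$-colouring exists it is the unique perfect $\ell$-colouring, so any other $\ell$-colouring is induced by an ideal $(q')$ with $(q') \neq (\overline{q'})$, and the dichotomy recalled from \cite{jj} then forces such a colouring to be chirally perfect.

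I do not expect a deep obstacle here; the argument is essentially a bookkeeping translation from ideals to their indices. The one point demanding care is the precise indexing in (b): the prime $p$ contributing to a balanced factor $(1 - \xi_{p^t})$ must be one that actually divides $n$, and the exponent in $p^{\phi(r)s}$ must be computed from the $p$-free part $r = n/p^t$ rather than from $n$ itself, so that distinct primes $p \mid n$ contribute independent factors to the multiplicative structure of $\ell$.
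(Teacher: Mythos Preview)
Your proposal is correct and matches the paper's own proof essentially line for line: both directions proceed by translating Theorem~\ref{thmH} to norm values via $N_n(d)=d^{\phi(n)}$, $N_n((1-\xi_{p^t})^s)=p^{\phi(r)s}$, and multiplicativity, and the closing remark is handled by Corollary~\ref{unique} in both. The paper is slightly terser (it simply says the \emph{only if} part is ``similar to the proof of Theorem~\ref{thmH}''), but there is no substantive difference.
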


\begin{proof}
We prove the {\em if} part of the the theorem as follows: First, if $\ell=d^{\phi(n)}$ for some $d\in\Z$, then taking $q=d$ makes $q$ balanced and so induces a perfect colouring of $\M_n$. Second, if $\ell=p^{\phi(r)s}$, where $r$ and $s$ are given as above, we can find a perfect colouring as induced by $q=(1-\xi_{p^t})^s$, since $N_n((1-\xi_{p^t})^s)=p^{\phi(r)s}$. Finally, iii. follows from the multiplicative property of $N_n(q)=\ell$. 

The {\em only if} part is similar to the proof of Theorem \ref{thmH}. Note that for certain cases, where there are more than one $\ell$-colourings of $\M_n$, with one being perfect, Corollary \ref{unique} guarantees that the other colourings are only chirally perfect.
\end{proof}

Thus, for a planar module $\M_n$, the values of $\ell$ for which a perfect colouring exists are those obtained from $\ell=d_{\phantom{1}}^{\phi(n)s_0}p_1^{\phi(r_1)s_1}\cdots p_k^{\phi(r_k)s_k}$, where $d\in\Z^+$, $p_i$'s are the primes that divide $n$, and $r_i$ is the $p_i$-free part of $n$, for any $s_i \geq 0$. For example, take $n=24$ and consider $\M_{24}$. Two primes divide $24$, namely $2$ and $3$. The respective $2$- and $3$-free parts of $24$ are $3$ and $8$. Since $\phi(3)=2$, $\phi(8)=4$, and $\phi(24)=8$, we get $\ell = d^{8s_0}\cdot2^{2s_1}\cdot3^{4s_2}=d^{8s_0}\cdot4^{s_1}\cdot81^{s_2}$. Plugging in values for $d,s_0,s_1$, and $s_2$ yields the indices for which a perfect Bravais colouring of $\M_{24}$ exists. Furthermore, together with Theorems \ref{thmH} and \ref{thmHL}, given a perfect $\ell$-colouring, one can find $(q)$ having $\ell$ as its index in $\M_n$.

\begin{rem}When a non-perfect colouring exists though, it is not very easy to find $q$, since factorisation in $\M_n$ in general is not straightforward especially with splitting primes. For explicit examples on non-perfect (i.e. chirally perfect) colourings of $\M_n$, see \cite{bugs, jj, philmag}. \end{rem}

Let $a_n(\ell)$ be the number of distinct (up to permutation of colours) Bravais $\ell$-colourings of $\M_n$. Baake and Grimm in \cite{bg} gave a number theoretic formulation that determines $a_n(\ell)$ for all $\ell>0$. This is done by formulating a Dirichlet series generating function $F_n(s)$ as the Dedekind zeta function of the cyclotomic field $\Q(\xi_n)$, namely: $$F_n(s):=\sum_{\ell=1}^{\infty}\frac{a_n(\ell)}{\ell^s}=\zeta_{\Q(\xi_n)}(s):=\sum_{q}\frac{1}{N_n(q)^s},\quad\mbox{Re}(s)>1,$$ where $q$ runs over all non-zero element of $\M_n=\Z[\xi_n]$. Further they show that $$F_n(s):=\sum_{\ell=1}^{\infty}\frac{a_n(\ell)}{\ell^s}=\prod_{p}E_n(p^{-s}),$$ where $p$ runs over the primes in $\Z$, and that $$E_n(p^{-s})=\sum_{j=0}^{\infty}{{j+m-1}\choose{m-1}}\frac{1}{(p^s)^{lj}}.$$ Note that for primes not dividing $n$, $p \equiv k \mod n$ contribute via $p^l$ as basic index, where $l$ is the smallest integer such that $k^l \equiv 1\mod n$, and $m=\phi(n)/l$. For primes dividing $n$, one computes the $p$-free part of $n$, say $r$, and replace $n$ with $r$ to determine $l$ and $m$ as in the case where $p$ does not divide $n$. For general primes $p$ and those that ramify in $\M_n$, values for $k,m,l$ are already given in \cite{bg}.

Relating Theorem \ref{thmHL} to the Dirichlet series given in \cite{bg}, one can easily generate a series which gives the indices of perfect colourings of $\M_n$. This can be done by slightly modifying the formulation of $E_n(p^{-s})$ by removing all indices that come from primes that split. Hence we have the following modified formulation of $E_n(p^{-s})$ that only generates basic indices for which a perfect Bravais $\ell$-colouring exists.$$E_n^G(p^{-s})=\sum_{j\in m\N^*}{{j+m-1}\choose{m-1}}\frac{1}{(p^s)^{lj}},\quad\mbox{where }m\N^*=\{mk|k\in\N\cup\{0\}\}.$$

Table \ref{tabH} lists the first few terms of the modified Dirichlet series which generates terms that give perfect $\ell$-colourings of $\M_n$. Note that for certain cases in Table \ref{tabH} where $a_n(\ell)>1$, only one perfect $\ell$-colouring exists and the others are chirally perfect.

It is possible to further simplify $E_n^G(p^{-s})$ by making all the numerators equal to 1, since we know that if there exists a perfect $\ell$-colouring, then it has to be unique. However, by doing this, we also eliminate noting when the existence of an $\ell$-colouring of $\M_n$ is shared by both perfect and chirally perfect colourings.

\section{\bf The characterisation of {\boldmath $K$}}

In \cite{philmag}, divisibility conditions for $\ell$ are derived. These conditions determine when an index $\ell$ may yield a non-trivial colour preserving group $K$. We say that $K$ is trivial when $K=T_{(q)}$, where $T_{(q)}$ is the group of translations generated be elements of $(q)$. Naturally, $T_{(q)}\simeq q\M_n=(q)$. For any Bravais colouring of $\M_n$ induced by $(q)$, $T_{(q)}\leq K$, and in general, $K$ is of the form $K=T_{(q)}\rtimes P_n$, where $P_n$ is a subgroup of $D_n$. Recall that we don't need to consider $N=\lcm(2,n)$ here anymore, because whenever $n$ is odd, a $2n$-fold rotation about the origin can no way be in $K$ (unless $(q)=(1)=\M_n$). See \cite{bugs, jj, philmag} for details.

\begin{lemma}
If $\ell$ is divisible by two distinct primes, $K=T_{(q)}$.
\end{lemma}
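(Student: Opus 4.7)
The plan is to exploit the decomposition $K = T_{(q)} \rtimes P_n$ with $P_n \leq D_n$: since $K = T_{(q)}$ is equivalent to $P_n = \{\id\}$, it suffices to show that any non-trivial element of $P_n$ forces $\ell$ to be a prime power. The contrapositive then yields the lemma.

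First, I would translate membership in $P_n$ into a divisibility condition in $\M_n$. A rotation $x \mapsto \zeta x$ with $\zeta = \xi_n^k \neq 1$ (combined with a translation $t$) lies in $K$ exactly when $(\zeta - 1)x + t \in (q)$ for every $x \in \M_n$; setting $x = 0$ forces $t \in (q)$, and then $(\zeta - 1)\M_n \subseteq (q)$, that is, $q \mid (\zeta - 1)$. For a reflection $x \mapsto \zeta \bar{x}$ (plus translation) in $K$, the analogous condition $\zeta \bar{x} - x \in (q)$ for every $x$ gives $q \mid (\zeta - 1)$ by testing $x = 1$ when $\zeta \neq 1$, and $q \mid (1 - \xi_n^2)$ by testing $x = \xi_n$ in the residual case $\zeta = 1$ (using $\xi_n - \xi_n^{-1} = \xi_n^{-1}(1 - \xi_n^2)$ and the fact that $\xi_n^{-1}$ is a unit).

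The arithmetic input is the classical identity
\[
N_n(1 - \zeta) \;=\; \Phi_d(1)^{\phi(n)/\phi(d)}
\]
valid whenever $\zeta$ is a primitive $d$-th root of unity with $d \mid n$, together with the standard evaluation $\Phi_d(1) = p$ if $d = p^s$ is a prime power and $\Phi_d(1) = 1$ otherwise. Hence $N_n(1 - \zeta)$ is either $1$ (forcing $q$ to be a unit and $\ell = 1$) or a power of a single prime. The same dichotomy applies to $N_n(1 - \xi_n^2)$: for $n$ odd one has $d = n$, while for $n$ even one has $d = n/2$, and the exclusion of $n \equiv 2 \pmod 4$ from Eq.~\ref{eq:cls1} rules out the awkward shape $n/2 = 2 p^s$ with $p$ odd, leaving $d$ either not a prime power or a power of a single prime.

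Combining the two cases, $P_n \neq \{\id\}$ implies $\ell$ divides a prime power. The main technical nuisance is the reflection case $\zeta = 1$, where one must verify the norm computation for $1 - \xi_n^2$ for every admissible $n$; the exclusion of $n \equiv 2 \pmod 4$ from the list is what makes this check uniform. Once that is settled, the contrapositive is immediate: if $\ell$ has two distinct prime divisors then $P_n$ cannot contain any non-identity rotation or reflection, so $K = T_{(q)}$.
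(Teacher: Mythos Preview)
Your argument is correct and follows the same contrapositive the paper uses: a non-trivial point-group part $P_n$ forces $\ell$ to divide a prime power. The paper's own proof is a one-line appeal to \cite{philmag} for precisely that fact; you have essentially reconstructed the relevant step of that reference via the norm identity $N_n(1-\zeta)=\Phi_d(1)^{\phi(n)/\phi(d)}$ together with the standard evaluation of $\Phi_d(1)$. So the strategy is the same, but your version is self-contained rather than citational.

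One small correction that does not affect validity: your parenthetical about excluding $n\equiv 2\pmod 4$ to avoid ``the awkward shape $n/2=2p^s$'' is misplaced. That shape can only occur when $n\equiv 0\pmod 4$ (e.g.\ $n=12$, $n/2=6$), not when $n\equiv 2\pmod 4$; and in any case there is nothing awkward about it, since $\Phi_d(1)$ is always either $1$ or a single rational prime, so $N_n(1-\xi_n^2)$ is automatically $1$ or a prime power for every $d>1$. Your stated dichotomy ``$d$ either not a prime power or a power of a single prime'' is already exhaustive, and no special exclusion is needed for the reflection case.
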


\begin{proof}
If $K$ is non-trivial, then $\ell$ must divide some prime power, say $p^s$, see \cite{philmag}. However, if $\ell$ is divisible by two distinct primes, $\ell$ cannot divide any prime power $p^s$. Therefore, $K=T_{(q)}$.
\end{proof}

The following lemmas determine which subgroup of $D_n$ is in $K$. %Note that we only check for subgroups of $D_n$ and not $D_N$. To see this, note that, whenever $n$ in Eq. \ref{eq:cls1} is odd, $N_n(1+\xi_n^i)=1$ for $i=1,2,\ldots,n-1$ and $N_n(1+\xi_n^i)=2^{\phi(n)}$ for $i=n$. This means that a subgroup of $D_N$, which is not in $D_n$, is in $K$ if and only if $\ell=1$. The only exception to this is $R_2$, which is the two-fold rotation about the origin. In general, 
Considering $\M_n$ as a subset of the complex plane (which is dense for $n>4$), let $R_k$ be the $k$-fold rotation about the origin, and let $S$ be the reflection along the real line. It then follows that $R_k(x)=\xi_n^{n/k}x$ and $S(x) = \overline{x}$ for $x\in\M_n$. Note that given a Bravais colouring induced by $(q)$, $R_k(q)\in(q)$ and when $q$ is balanced, $S(q)\in(q)$.

\begin{lemma}\label{lem}Consider a Bravais colouring $c$ of $\M_n$ induced by $q$. Then $\langle R_k\rangle \leq K$ if and only if $(1-\xi_n^{n/k}) \subseteq (q)$. Similarly, $\langle S\rangle \leq K$ if and only if $(1-\xi_n^{n-2i}) \subseteq (q)$ for $i=1,2,\ldots,n$.
\end{lemma}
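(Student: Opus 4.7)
The plan is to treat both halves of the lemma as instances of a single criterion: an isometry $f$ of $\M_n$ lies in $K$ precisely when $f$ fixes every residue class modulo $(q)$, i.e.\ when $f(x)-x\in(q)$ for every $x\in\M_n$. Since $K$ is a group and both $\langle R_k\rangle$ and $\langle S\rangle$ are cyclic, $\langle R_k\rangle\leq K$ reduces to $R_k\in K$, and similarly $\langle S\rangle\leq K$ reduces to $S\in K$.

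For the rotation part I would use the formula $R_k(x)=\xi_n^{n/k}x$ recorded just before the lemma, so that
$$R_k(x)-x=(\xi_n^{n/k}-1)\,x.$$
Specialising to $x=1$ gives the necessity of $\xi_n^{n/k}-1\in(q)$, which (after multiplying by the unit $-1$) is the stated condition $(1-\xi_n^{n/k})\subseteq(q)$. Conversely, if $1-\xi_n^{n/k}\in(q)$ then $(\xi_n^{n/k}-1)x\in(q)$ for every $x\in\M_n$ simply because $(q)$ is an ideal, so $R_k\in K$.

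For the reflection part I would exploit that $\M_n=\Z[\xi_n]$ is generated as a $\Z$-module by the powers $\xi_n^i$ and that $S\colon x\mapsto\overline{x}$ is additive, so $S\in K$ iff $\overline{\xi_n^i}-\xi_n^i\in(q)$ for every $i$ in such a generating range. The short computation
$$\overline{\xi_n^i}-\xi_n^i \;=\; \xi_n^{-i}-\xi_n^i \;=\; -\xi_n^{-i}\bigl(1-\xi_n^{2i}\bigr)$$
reduces the membership, via the unit factor $-\xi_n^{-i}$, to $1-\xi_n^{2i}\in(q)$; a further multiplication by the unit $\xi_n^{-2i}$ shows that as principal ideals $(1-\xi_n^{2i})=(1-\xi_n^{n-2i})$, which is exactly the form stated. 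Letting $i$ run through $1,\ldots,n$ covers all needed generators (with harmless repetitions).

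The only mildly delicate point is the reflection case, where one must (i) pass from the universal condition $S(x)\equiv x\pmod{(q)}$ for all $x\in\M_n$ to a finite check on a $\Z$-generating set of $\M_n$, and (ii) normalise away the unit factors $-\xi_n^{-i}$ and $\xi_n^{-2i}$ to match the precise form $(1-\xi_n^{n-2i})\subseteq(q)$ appearing in the statement. Everything else, including the closure step $R_k\in K\folg\langle R_k\rangle\leq K$, follows directly from the definition of $K$ together with the ideal property of $(q)$.
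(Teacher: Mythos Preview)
Your argument is correct and follows essentially the same route as the paper: evaluate at the generators $1$ (for $R_k$) and $\xi_n^i$ (for $S$) to obtain necessity, and use the ideal property of $(q)$ for sufficiency. Your treatment of the reflection sufficiency via $\Z$-linearity on the generators $\xi_n^i$ is in fact more explicit than the paper's, which dispatches that direction with ``a similar argument holds for the case of $S$''; apart from an inconsequential sign in the factorisation $\xi_n^{-i}-\xi_n^i=\xi_n^{-i}(1-\xi_n^{2i})$, the details match.
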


\begin{proof}
Suppose $R_k\in K$, then $c(1)=c(\xi_n^{n/k})$, and so $(1-\xi_n^{n/k}) \subseteq (q)$. Similarly, if $S\in K$, then $c(\xi_n^i)=c(\overline{\xi}_n^i)=c(\xi_n^{n-i})$ implying that $(1-\xi_n^{n-2i}) \subseteq (q)$ for any $i=1,2,\ldots,n$. Conversely, consider any non-trivial coset of $(q)$ in $\M_n$, say $(q)+s$, where $s\in\M_n$ and $s\notin(q)$. The rotation $R_k$ then maps the coset $(q)+s$ to $(q)+\xi_n^{n/k}s$. These two cosets are equal if and only if $(1-\xi_n^{n/k}) \subseteq (q)$, implying that $R_k$ is in $K$. A similar argument holds for the case of $S$.
\end{proof}

As the previous lemma suggests, checking for $S$ requires checking $1-\xi_n^{j}$ for values of $j=n-2i$, $i=1,2,\ldots,n$. That is, if $n$ is even, one only checks the even powers of $\xi_n$, but when $n$ is odd, all powers of $\xi_n$ have to be checked. This brings us to the next result.

\begin{cor}
Suppose $n$ is divisible by two distinct primes, then $S\in K$ if and only if $\ell=1$.\end{cor}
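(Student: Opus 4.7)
The ``if'' direction is immediate: when $\ell = 1$, $(q) = \M_n$ and every symmetry of $\M_n$ (in particular $S$) lies in $K$. The task is therefore to prove the converse, namely that if $n$ has two distinct prime divisors and $S \in K$, then $(q) = (1)$.

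My plan is to apply Lemma \ref{lem}, which forces $(1 - \xi_n^{n-2i}) \subseteq (q)$ for each $i = 1, \ldots, n$, and then to exhibit a single $i$ for which $1 - \xi_n^{n-2i}$ is already a unit in $\M_n$. The key input is the classical evaluation $\Phi_m(1) = 1$ whenever $m \geq 2$ has at least two distinct prime divisors; since $N_{\Q(\xi_m)/\Q}(1 - \xi_m) = \Phi_m(1)$, this tells us that $1 - \xi_m$ is a unit exactly in that range of $m$. Any exponent $j \equiv n - 2i \bmod n$ for which $\xi_n^j$ is a primitive $m$-th root of unity with $m$ in that range will therefore finish the job, by forcing $(q)$ to contain a unit.

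I would split into cases on the parity of $n$. If $n$ is odd, then $2$ is invertible modulo $n$, so the map $i \mapsto n - 2i \bmod n$ is a bijection on $\Z/n\Z$; choosing $i$ with $n - 2i \equiv 1 \pmod n$ gives $(1 - \xi_n) \subseteq (q)$, and $1 - \xi_n$ is a unit because $n$ itself has two distinct prime factors. If $n$ is even, then $n - 2i$ only sweeps through even residues modulo $n$, so I would take $j = 2$. Here $\xi_n^2$ has order $n/2$, so I need $n/2$ to still be divisible by two distinct primes. This is where the standing hypothesis $n \not\equiv 2 \bmod 4$ from the list in Eq.~\ref{eq:cls1} does the work: it forces $4 \mid n$, so $n/2$ remains even and retains the odd prime divisor of $n$, giving two distinct primes as required.

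The main subtlety is precisely this even case: without the restriction $n \not\equiv 2 \bmod 4$, one could have $n = 2p^k$ with $p$ odd, making $n/2 = p^k$ a prime power and $1 - \xi_n^2$ a non-unit of norm a power of $p$, so that the argument would break. Given the restriction, however, the proof reduces to one application of Lemma \ref{lem} together with the standard evaluation of $\Phi_m$ at $1$, which may be consulted in \cite{wash}.
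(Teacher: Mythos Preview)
Your argument is correct and follows essentially the same line as the paper's proof: both invoke Lemma~\ref{lem} and then observe that one of the elements $1-\xi_n^{n-2i}$ is a unit because $\Phi_m(1)=1$ when $m$ has two distinct prime factors. The paper is slightly terser, using $1-\xi_n^2$ uniformly for both parities (citing \cite{philmag} for $N_n(1-\xi_n^2)=1$), whereas you make the odd/even split explicit and handle the odd case with $1-\xi_n$; since $1-\xi_n$ and $1-\xi_n^2$ are Galois conjugates for odd $n$, this amounts to the same thing.
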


\begin{proof}
Recall that if $(q')\subseteq(q)$, then $\ell=N_n(q)$ divides $N_n(q')$. If $n$ is divisible by two distinct primes then $N_n(1-\xi_n^2)=1$, see \cite{philmag}. Then $S\in K$ if and only if $\ell$ divides 1.
\end{proof}

%The derivation of $N_n(1-\xi_n^i)$ is computed in \cite{philmag}.

%A result from \cite{philmag} states that when $n$ is odd, $R_{2n} \in K$ if and only if $\ell=1$. This explains why previously, one only needs to check which subgroup of $D_n$ and not $D_N$ fixes the colours. However, for the case when $n$ is odd, $R_2$ must be checked explicitly as it is not in $D_n$. The following 

\begin{lemma}
For odd values of $n$: If $\ell=2^{\phi(n)/2}$, then $R_2\in K$; more so, $K=T_{(q)}\rtimes C_2$.
\end{lemma}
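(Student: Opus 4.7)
The plan is to prove first that $R_2 \in K$ by reducing to an ideal-divisibility condition, and then to identify the remaining point-group elements of $K$. The rotation $R_2$ is multiplication by $-1$, and although $\xi_n^{n/2}$ is not literally a power of $\xi_n$ for odd $n$, the equality $\xi_{2n}^{n} = -1 \in \M_n = \M_{2n}$ lets the argument of Lemma \ref{lem} apply verbatim: $R_2 \in K$ iff $(1-(-1)) = (2) \subseteq (q)$, equivalently $(q) \mid (2)$ as ideals.

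To establish $(q) \mid (2)$, I use that $N_n(q) = 2^{\phi(n)/2}$ is a pure prime power, so every prime divisor of $(q)$ lies over $2$. Since $n$ is odd, $2$ is unramified in $\M_n$ and factors as $(2) = \mathfrak{p}_1 \cdots \mathfrak{p}_g$, where $l$ is the multiplicative order of $2$ modulo $n$, $g = \phi(n)/l$, and each $\mathfrak{p}_i$ has norm $2^l$. Writing $(q) = \prod \mathfrak{p}_i^{a_i}$, the norm constraint becomes $\sum_i a_i = g/2$. A direct inspection of each odd $n$ in Eq. \ref{eq:cls1} yields $g \in \{1,2\}$; the case $g=1$ would force $\sum_i a_i = 1/2$, which is impossible, so $g = 2$ and $\sum_i a_i = 1$. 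Consequently $(q) = \mathfrak{p}_i$ for some $i$, hence $(q) \mid (2)$ and $R_2 \in K$.

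To complete the identification $K = T_{(q)} \rtimes C_2$, I need to show that $K$ contains no other rotations and no reflections. If $R_k \in K$ with $k > 1$, then $R_p \in K$ for every prime $p \mid k$, and Lemma \ref{lem} translates this into $\ell \mid N_n(1-\xi_n^{n/p})$, a power of $p$. Since $\ell = 2^{\phi(n)/2}$ is a positive power of $2$, only $p = 2$ survives, and the only power of $2$ dividing $N = 2n$ (with $n$ odd) that exceeds $1$ is $2$ itself; hence $k = 2$. For reflections, I note that $\ell = 2^{\phi(n)/2}$ does not appear in the list of Theorem \ref{thmHL}: it is not $d^{\phi(n)}$ for any integer $d$, and because $n$ is odd it is not $p^{\phi(r)s}$ for any prime $p \mid n$. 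Hence the colouring is only chirally perfect, so $H = G' = \M_n \rtimes C_N$ contains no reflections, and neither does $K \leq H$. Combining these two observations yields $K = T_{(q)} \rtimes C_2$.

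The one step that is not essentially formal is the assertion $g \leq 2$ for odd $n$ in the list. I do not see a clean conceptual reason for this; the verification reduces to computing the multiplicative order of $2$ modulo each such $n$, which is the main (though routine) obstacle.
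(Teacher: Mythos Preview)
Your treatment of $R_2\in K$ and the exclusion of further rotations is correct and runs parallel to the paper's proof: both reduce $R_2\in K$ to the condition $(q)\mid(2)$, observe that an ideal of norm $2^{\phi(n)/2}$ exists precisely when $2$ splits into two primes in $\M_n$ (your case-check $g\le 2$ amounts to exactly this, and the paper likewise just lists the relevant $n$), and then rule out $R_p$ for each odd prime $p\mid n$ via Lemma~\ref{lem} together with the fact that $\ell$ is a nontrivial power of~$2$.

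Your reflection argument, however, diverges from the paper and carries a real gap. You invoke Theorem~\ref{thmHL} to conclude that the colouring is chirally perfect, so that $H=G'$ already excludes reflections. This fails for $n\in\{17,33\}$: there $-1$ is a power of $2$ modulo $n$ (namely $2^{4}\equiv-1\pmod{17}$ and $2^{5}\equiv-1\pmod{33}$), so complex conjugation lies in the decomposition group of each prime above $2$, whence $(q)=(\overline q)$. By the equivalence ``perfect $\Leftrightarrow (q)=(\overline q)$'' recalled in Section~2, the colouring is then \emph{perfect}; $H=G$ does contain reflections, and your route through $H$ no longer rules them out of $K$. (This also signals an inconsistency between that equivalence and the literal statement of Theorem~\ref{thmHL}, so the latter cannot safely be used here.) The paper sidesteps this entirely by staying with the divisibility mechanism of Lemma~\ref{lem}: since $n$ is odd, $S\in K$ would in particular force $(1-\xi_n)\subseteq(q)$ and hence $\ell\mid N_n(1-\xi_n)$, but $N_n(1-\xi_n)$ equals the odd prime $p$ when $n=p^t$ and equals $1$ when $n$ has two distinct prime factors --- either way incompatible with $\ell=2^{\phi(n)/2}>1$. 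Replacing your reflection step with this direct check closes the argument.
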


\begin{proof}
Similar to the proof of the previous lemma, one sees that $R_2\in K$ if and only if $(2) \subseteq (q)$. In particular, if $R_2\in K$, then $\ell$ divides $2^{\phi(n)}$. The case $\ell=2^{\phi(n)/2}$ only occurs when $2$ splits in $\M_n$. This happens for the following values of $n$: 7, 15, 17, 21, 33, 35, 45 (essentially, Thm 2.23 of \cite{wash}, also see \cite{bg}). More so, since $\ell=2^{\phi(n)/2}$, then it cannot divide other prime powers aside from $2^{\phi(n)}$, hence no other subgroup of $D_n$ is in $K$ aside from $\langle R_2\rangle$, and so $K=T_{(q)}\rtimes C_2$, compare \cite{philmag}.
\end{proof}

The previous lemmas enable us to determine all non-trivial cases of $K$ for all the values of $n$ given in Eq. \ref{eq:cls1}. The list is given in Table \ref{tabK}, and formally we write this as Theorem \ref{thmK} below. For the three remarks given in this theorem, Thm 2.23 of \cite{wash} is used.

\begin{thm}\label{thmK}
Consider an $\ell$-colouring of $\M_n$ for values of $n$ listed in Eq. \ref{eq:cls1}. The colour preserving group $K$ is trivial, that is $K=T_{(q)}$, except for those values of $\ell$ listed in Table \ref{tabK}. In this table, $\ell\lrcorner P_n$ denotes that an $\ell$-colouring of $\M_n$ yields $K=T_{(q)}\rtimes P_n$. For certain cases with an asterisk $(^*)$, we have the following:
\begin{enumerate}
\item[(i)] $(2)$ splits into two distinct primes in $\M_n$ for $n=7,15,17,21,33,35,45:$ In these cases let $(2)=(q_2)(\overline{q}_2)$. The colourings induced by $2, q_2,$ or $\overline{q}_2$ yield $P_n=C_2$. However, $K$ is trivial for the colourings induced by $q_2^2$ or $\overline{q}_2^2$.
\item[(ii)] For primes $p=3,5,7$, $(1-\xi_p)$ may split depending on $n$. When it does, it splits into two distinct primes in $\M_n$. In any case, we let $(1-\xi_p)=(q_p)(\overline{q}_p)$. Whenever $(1-\xi_p)$ splits, the colourings induced by $1-\xi_p, q_p,$ or $\overline{q}_p$ yield $P_n=C_p$, while the colourings induced by $q_p^2$ or $\overline{q}_p^2$ yield a trivial $K$. For $n=48,60,84$, $(1-\xi_3)$ splits; for $n=40,60$, $(1-\xi_5)$ splits; and for $n=21,84$, $(1-\xi_7)$ splits; all in the way just described.
\item[(iii)] Finally, $(1-\xi_4)$ splits into two distinct primes in $\M_n$ for $n=28,60,84$. In these cases let $(1-\xi_4)=(q_4)(\overline{q}_4)$. Write $(2)=(1-\xi_4)(1-\xi_4)=(q_4)(\overline{q}_4)(q_4)(\overline{q}_4)$. Consider the expression $q_4^i(1-\xi_4)^j$ or $\overline{q}_4^i(1-\xi_4)^j$. For the following triples $(i,j,\ell)$, $q_4^i(1-\xi_4)^j$ or $\overline{q}_4^i(1-\xi_4)^j$ induces an $\ell$-colouring: $(0,1,2^{\phi(n)/2}),(1,0,2^{\phi(n)/4})$ with $P_n=C_4;$ $(0,2,2^{\phi(n)}),$ $(2,0,2^{\phi(n)/2}),$ $(1,1,2^{3\phi(n)/4})$ with $P_n=C_2;$  and $(4,0,2^{\phi(n)}),$ $(2,1,2^{\phi(n)}),$ $(3,0,2^{3\phi(n)/4})$ with trivial $K$.
\end{enumerate}
\end{thm}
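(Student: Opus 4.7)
The plan is to combine the divisibility reduction of the earlier lemmas with Lemma \ref{lem} and apply it to each value of $n$ in Eq. \ref{eq:cls1}. Since $\ell$ divisible by two distinct primes already forces $K=T_{(q)}$, only $\ell=p^s$ needs to be examined. For each such prime-power norm, Lemma \ref{lem} reduces membership of $R_k$ in $K$ to the ideal containment $(1-\xi_n^{n/k})\subseteq(q)$, and analogously for $S$ via $(1-\xi_n^{n-2m})$.

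For each $n$ in Eq. \ref{eq:cls1} I would enumerate the divisors $k$ of $n$, compute the prime factorisation of the (finitely many) elements $1-\xi_n^{n/k}$---these are built from ramified-prime factors of the form $1-\xi_{p^t}$ as discussed before Lemma \ref{G'}---and match them against the factorisation of $(q)$ of the given norm. The reflection $S$ is handled identically via $1-\xi_n^{n-2m}$; when $n$ has two distinct prime divisors the preceding corollary already forces $S\notin K$ unless $\ell=1$, and in the remaining cases only finitely many parities need be checked. Each row of Table \ref{tabK} is then obtained by collecting the subgroup $P_n\leq D_n$ generated by the rotations and reflections so identified.

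The main obstacle is the three asterisked families (i)--(iii), where the relevant prime splits in $\M_n$ as $(q_p)(\overline{q}_p)$ (the splitting patterns themselves being furnished by Thm 2.23 of \cite{wash}). The crucial observation is that $p=q_p\overline{q}_p\in(q_p)$ but $p\notin(q_p^2)$, since $q_p$ and $\overline{q}_p$ are distinct primes and $(\overline{q}_p)\not\subseteq(q_p)$. Thus, although $(q_p)$ and $(q_p^2)$ both have prime-power norms, Lemma \ref{lem} places $R_k$ into $K$ for the single-factor colouring yet removes it for the square, accounting for the disparity in (i) and (ii).

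Case (iii) is the most intricate: for $n=28,60,84$ one must simultaneously handle the factor $1-\xi_4$ and its splits $q_4,\overline{q}_4$, whose powers combine in the products $q_4^i(1-\xi_4)^j=q_4^{i+j}\overline{q}_4^{\,j}$. Here I would enumerate the finitely many admissible pairs $(i,j)$ of given norm $\ell=2^{\phi(n)(i+2j)/4}$ and, for each, decide whether $1-\xi_n^{n/k}$ or $1-\xi_n^{n-2m}$ belongs to $(q_4^{i+j}\overline{q}_4^{\,j})$ by comparing the exponents of $q_4$ and $\overline{q}_4$ in each factorisation. The bookkeeping is the tedious part, but the procedure is uniform and reduces throughout to the single criterion of Lemma \ref{lem}.
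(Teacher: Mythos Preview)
Your proposal is correct and follows essentially the same route as the paper: the paper's proof of Theorem~\ref{thmK} amounts to the single sentence that ``the previous lemmas enable us to determine all non-trivial cases of $K$'', with Thm~2.23 of \cite{wash} supplying the splitting behaviour behind the asterisked items. Your write-up simply makes explicit the case-by-case procedure (reduce to prime-power $\ell$, test the containments $(1-\xi_n^{n/k})\subseteq(q)$ via Lemma~\ref{lem}, and track the split factors $q_p,\overline{q}_p$) that the paper leaves implicit; in fact your argument for why $R_k\in K$ for $(q_p)$ but not for $(q_p^2)$ is more detailed than anything the paper states.
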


Theorem \ref{thmK} (iii) explains an interesting case for $n=28,60,84$. In these cases, there are two $2^{\phi(n)/4}$-colourings, three $2^{\phi(n)/2}$-colourings, four $2^{3\phi(n)/4}$-colourings, and five $2^{\phi(n)}$-colourings of $\M_n$. Assuming $(1-\xi_4)=(q_4)(\overline{q}_4)$, then the two $2^{\phi(n)/4}$-colourings are induced by $q_4$ and $\overline{q}_4$; the three $2^{\phi(n)/2}$-colourings are induced by $1-\xi_4$, $q_4^2$, and $\overline{q}_4^2$; the four $2^{3\phi(n)/4}$-colourings are induced by $q_4(1-\xi_4)$, $\overline{q}_4(1-\xi_4)$, $q_4^3$, and $\overline{q}_4^3$; and the five $2^{\phi(n)}$-colourings are induced by $2$, $q_4^2(1-\xi_4)$, $\overline{q}_4^2(1-\xi_4)$, $q_4^4$, and $\overline{q}_4^4$.

%\begin{thm}
%Suppose $n=p>2$, where $p$ is prime in $\Z$, then we have the following complete characterisation of $H$ and $K$.
%\begin{enumerate}
%\item[(i)] $H=G$ if and only if when $\ell=p^j$ for $j>0$ or when $\ell=d^{\phi(n)}$ for any $d$ integer in $\Z$. More so, the perfect colourings are induced only by $q=d^i(1-\xi_p)^j$, where $i,j\geq 0$ (and its associates in $\M_n$). Otherwise, $H=G'$.
%\item[(i)] There are only two cases where $K$ is non-trivial. First when $\ell=p$, then $K=T_{(q)}\rtimes D_n$. Second when $\ell=2^{\phi(n)}$, then $K=T_{(q)} \rtimes C_2$. Otherwise, $K=T_{(q)}$.
%\end{enumerate}
%\end{thm}

%It is interesting to note that among the prime numbers in (\ref{eq:cls1}), the cases $n=7,17$ are special since only in these cases does 2 split in $\M_n$. In other cases where $n$ is prime, 2 is inert in $\M_n$.

%\section{\bf Conclusion}

%A complete characterisation of the colour symmetry group $H$ and colour preserving group $K$ is given for all Bravais $\ell$-colourings of $\M_n$ for all values of $n$ for which $\M_n$ is of class number one. More so, $H$ is characterised in two ways, first via the factorisation of $(q)$ in $\M_n$ as $(q)$ induces a Bravais colouring; and second via the values of the algebraic norm $N_n(q)=\ell$ for which there exist Bravais colourings. %In general, $H$ is only either $G=\M_n\rtimes D_N$ or $G'=\M_n\rtimes C_N$. Values of $\ell$ that do not appear in the modified Dirichlet series
%Finally, for all but finite cases, $K$ is trivial. The cases for which $K$ is non-trivial are given in Table \ref{tabK}.

\section*{Acknowledgement}

It is a pleasure to thank Michael Baake for helpful discussion. This work is supported by DFG via the Collaborative Research Centre 701 through the faculty of Mathematics at the University of Bielefeld.

\begin{table}
{\small
\caption{\label{tabH} First terms of the modified Dirichlet series $E_n^G(p^{-s})$ that generates indices that give perfect $\ell$-colourings of $\M_n$. For each of these indices, there exists a unique perfect colouring.}
\begin{tabular}{cp{400pt}} \hline $n$ & $\zeta'_{\M_n}(s)$ \\[3pt] \hline \\[-10pt]
$3$ & $1 + \frac{1}{3^s} + \frac{1}{4^s} + \frac{1}{9^s} + \frac{1}{12^s} + \frac{1}{16^s} + \frac{1}{25^s} + \frac{1}{27^s} + \frac{1}{36^s} + \frac{1}{48^s} + \frac{3}{49^s} + \frac{1}{64^s} + \frac{1}{75^s} + \frac{1}{81^s} + \frac{1}{100^s}+\frac{1}{108^s}  + \cdots$
\\[3pt] $4$ & $1+\frac{1}{2^s} + \frac{1}{4^s} + \frac{1}{8^s} + \frac{1}{9^s} + \frac{1}{16^s} + \frac{1}{18^s} + \frac{3}{25^s} + \frac{1}{32^s} + \frac{1}{36^s} + \frac{1}{49^s} + \frac{3}{50^s} + \frac{1}{64^s} + \frac{1}{72^s} + \frac{1}{98^s} + \frac{3}{100^s} + \cdots$
\\[3pt] $5$ & $1+\frac{1}{5^s}+ \frac{1}{16^s}+ \frac{1}{25^s}+ \frac{1}{80^s}+ \frac{1}{81^s}+\frac{1}{125^s}+\frac{1}{256^s}+\frac{1}{400^s}+\frac{1}{405^s}+\frac{1}{625^s}+\frac{1}{1280^s}+\frac{1}{1296^s}+\frac{1}{2000^s}+\cdots$
\\[3pt] $7$ & $1+\frac{1}{7^s}+\frac{1}{49^s}+\frac{3}{64^s}+\frac{1}{343^s}+\frac{3}{448^s}+\frac{1}{729^s}+\frac{1}{2401^s}+\frac{3}{3136^s}+\frac{5}{4096^s}+\frac{1}{5103^s}+\frac{1}{15625^s}+\frac{1}{16807^s}+\cdots$
\\[3pt] $8$ & $1+\frac{1}{2^s}+\frac{1}{4^s}+\frac{1}{16^s}+\frac{1}{32^s}+\frac{1}{64^s}+\frac{3}{81^s}+\frac{1}{128^s}+\frac{3}{162^s}+\frac{1}{256^s}+\frac{3}{324^s}+\frac{1}{512^s}+\frac{3}{625^s}+\frac{1}{1024^s}+\frac{3}{1250^s}+\cdots$
\\[3pt] $9$ & $1+\frac{1}{3^s}+\frac{1}{9^s}+\frac{1}{27^s}+\frac{1}{64^s}+\frac{1}{81^s}+\frac{1}{192^s}+\frac{1}{243^s}+\frac{1}{576^s}+\frac{1}{729^s}+\frac{1}{1728^s}+\frac{1}{2187^s}+\frac{1}{4096^s}+\frac{1}{5184^s}+\frac{1}{6561^s}+\cdots$
\\[3pt] $11$ & $1+\frac{1}{11^s}+\frac{1}{121^s}+\frac{1}{1024^s}+\frac{1}{1331^s}+\frac{1}{11264^s}+\frac{1}{14641^s}+\frac{3}{59049^s}+\frac{1}{161051^s}+\frac{3}{649539^s}+\frac{1}{1362944^s}+\cdots$
\\[3pt] $12$ & $1+\frac{1}{4^s}+\frac{1}{9^s}+\frac{1}{16^s}+\frac{1}{36^s}+\frac{1}{64^s}+\frac{1}{81^s}+\frac{1}{144^s}+\frac{1}{256^s}+\frac{1}{324^s}+\frac{1}{576^s}+\frac{3}{625^s}+\frac{1}{729^s}+\frac{1}{1024^s}+\frac{1}{1296^s}+\cdots$
\\[3pt] $13$ & $1+\frac{1}{13^s}+\frac{1}{169^s}+\frac{1}{2197^s}+\frac{1}{4096^s}+\frac{1}{28561^s}+\frac{1}{53248^s}+\frac{1}{371293^s}+\frac{35}{513441^s}+\frac{1}{692224^s}+\frac{1}{4826809^s}+\cdots$
\\[3pt] $15$ & $1+\frac{1}{25^s}+\frac{1}{81^s}+\frac{3}{256^s}+\frac{1}{625^s}+\frac{1}{2025^s}+\frac{5}{4096^s}+\frac{3}{6400^s}+\frac{1}{6561^s}+\frac{1}{15625^s}+\frac{3}{20736^s}+\frac{1}{50626^s}+\frac{7}{65536^s}+\cdots$
\\[3pt] $16$ & $1+\frac{1}{2^s}+\frac{1}{4^s}+\frac{1}{8^s}+\frac{1}{16^s}+\frac{1}{32^s}+\frac{1}{64^s}+\frac{1}{128^s}+\frac{1}{256^s}+\frac{1}{512^s}+\frac{1}{1024^s}+\frac{1}{2048^s}+\frac{1}{4096^s}+\frac{3}{6561^s}+\frac{1}{8192^s}+\cdots$
\\[3pt] $17$ & $1+\frac{1}{17^s}+\frac{1}{289^s}+\frac{1}{4913^s}+\frac{3}{65536^s}+\frac{1}{83521^s}+\frac{3}{1114112^s}+\frac{1}{1419857^s}+\frac{3}{18939904^s}+\frac{1}{24137569^s}+\frac{1}{43046721^s}+\cdots$
\\[3pt] $19$ & $1+\frac{1}{19^s}+\frac{1}{361^s}+\frac{1}{6859^s}+\frac{1}{130321^s}+\frac{1}{262144^s}+\frac{1}{2476099^s}+\frac{1}{4980736^s}+\frac{1}{47045881^s}+\frac{1}{94633984^s}+\cdots$
\\[3pt] $20$ & $1+\frac{1}{16^s}+\frac{3}{25^s}+\frac{1}{256^s}+\frac{3}{400^s}+\frac{5}{625^s}+\frac{1}{4096^s}+\frac{3}{6400^s}+\frac{3}{6561^s}+\frac{5}{10000^s}+\frac{7}{15625^s}+\frac{1}{65536^s}+\frac{3}{102400^s}+\cdots$
\\[3pt] $21$ & $1+\frac{3}{49^s}+\frac{1}{729^s}+\frac{5}{2401^s}+\frac{3}{4096^s}+\frac{3}{35721^s}+\frac{9}{200704^s}+\frac{1}{531441^s}+\frac{5}{1750329^s}+\frac{3}{2985984^s}+\frac{7}{5764801^s}+\cdots$
\\[3pt] $24$ & $1+\frac{1}{4^s}+\frac{1}{16^s}+\frac{1}{64^s}+\frac{3}{81^s}+\frac{1}{256^s}+\frac{3}{324^s}+\frac{1}{1024^s}+\frac{3}{1296^s}+\frac{1}{4096^s}+\frac{3}{5184^s}+\frac{5}{6561^s}+\frac{1}{16384^s}+\cdots$
\\[3pt] $25$ & $1+\frac{1}{5^s}+\frac{1}{25^s}+\frac{1}{125^s}+\frac{1}{625^s}+\frac{1}{3125^s}+\frac{1}{15625^s}+\frac{1}{78125^s}+\frac{1}{390625^s}+\frac{1}{1048576^s}+\frac{1}{1953125^s}+\frac{1}{5242880^s}+\cdots$
\\[3pt] $27$ & $1+\frac{1}{3^s}+\frac{1}{9^s}+\frac{1}{27^s}+\frac{1}{81^s}+\frac{1}{243^s}+\frac{1}{729^s}+\frac{1}{2187^s}+\frac{1}{6561^s}+\frac{1}{19683^s}+\frac{1}{59049^s}+\frac{1}{177147^s}+\frac{1}{262144^s}+\cdots$
\\[3pt] $28$ & $1+\frac{3}{64^s}+\frac{1}{2401^s}+\frac{5}{4096^s}+\frac{3}{153664^s}+\frac{7}{262144^s}+\frac{3}{531441^s}+\frac{1}{5764801^s}+\frac{5}{9834496^s}+\frac{9}{16777216^s}+\frac{9}{34012224^s}+\cdots$
\\[3pt] $32$ & $1+\frac{1}{2^s}+\frac{1}{4^s}+\frac{1}{8^s}+\frac{1}{16^s}+\frac{1}{32^s}+\frac{1}{64^s}+\frac{1}{128^s}+\frac{1}{256^s}+\frac{1}{512^s}+\frac{1}{1024^s}+\frac{1}{2048^s}+\frac{1}{4096^s}+\frac{1}{8192^s}+\cdots$
\\[3pt] $33$ & $1+\frac{1}{121^s}+\frac{1}{14641^s}+\frac{3}{59049^s}+\frac{3}{1048576^s}+\frac{1}{1771561^s}+\frac{3}{7144929^s}+\frac{3}{126877696^s}+\frac{1}{214358881^s}+\frac{3}{864536409^s}+\cdots$
\\[3pt] $35$ & $1+\frac{1}{2401^s}+\frac{1}{15625^s}+\frac{1}{5764801^s}+\frac{3}{16777216^s}+\frac{1}{37515625^s}+\frac{1}{244140625^s}+\frac{1}{13841287201^s}+\frac{3}{40282095616^s}+\cdots$
\\[3pt] $36$ & $1+\frac{1}{9^s}+\frac{1}{64^s}+\frac{1}{81^s}+\frac{1}{576^s}+\frac{1}{729^s}+\frac{1}{4096^s}+\frac{1}{5184^s}+\frac{1}{6561^s}+\frac{1}{36864^s}+\frac{1}{46656^s}+\frac{1}{59049^s}+\frac{1}{262144^s}+\cdots$
\\[3pt] $40$ & $1+\frac{1}{16^s}+\frac{1}{256^s}+\frac{3}{625^s}+\frac{1}{4096^s}+\frac{3}{10000^s}+\frac{1}{65536^s}+\frac{3}{160000^s}+\frac{5}{390625^s}+\frac{1}{1048576^s}+\frac{3}{2560000^s}+\cdots$
\\[3pt] $44$ & $1+\frac{1}{121^s}+\frac{1}{1024^s}+\frac{1}{14641^s}+\frac{1}{123904^s}+\frac{1}{1048576^s}+\frac{1}{1771561^s}+\frac{1}{14992384^s}+\frac{1}{126877696^s}+\frac{1}{214358881^s}+\cdots$
\\[3pt] $45$ & $1+\frac{1}{81^s}+\frac{1}{6561^s}+\frac{1}{15625^s}+\frac{1}{531441^s}+\frac{1}{1265625^s}+\frac{3}{16777216^s}+\frac{1}{43046721^s}+\frac{1}{102515625^s}+\frac{3}{1358954496^s}+\cdots$
\\[3pt] $48$ & $1+\frac{1}{4^s}+\frac{1}{16^s}+\frac{1}{64^s}+\frac{1}{256^s}+\frac{1}{1024^s}+\frac{1}{4096^s}+\frac{3}{6561^s}+\frac{1}{16384^s}+\frac{3}{26244^s}+\frac{1}{65536^s}+\frac{3}{104976^s}+\frac{1}{262144^s}+\cdots$
\\[3pt] $60$ & $1+\frac{3}{256^s}+\frac{3}{625^s}+\frac{3}{6561^s}+\frac{5}{65536^s}+\frac{9}{160000^s}+\frac{5}{390625^s}+\frac{9}{1679616^s}+\frac{9}{4100625^s}+\frac{7}{16777216^s}+\frac{15}{40960000^s}+\cdots$
\\[3pt] $84$ & $1+\frac{3}{2401^s}+\frac{3}{4096^s}+\frac{3}{531441^s}+\frac{5}{5764801^s}+\frac{9}{9834496^s}+\frac{5}{16777216^s}+\frac{9}{1275989841^s}+\frac{9}{2176782336^s}+\cdots$
\\[3pt] \hline
\end{tabular}
}
\end{table}

\begin{table}
\caption{\label{tabK} Complete characterisation of $K$ for all $n$ in Eq. \ref{eq:cls1}.
${\ell}\lrcorner{P_n}$ denotes that an $\ell$-colouring yields $K=T_{(q)}\rtimes P_n$. Other $\ell$-colourings not listed here yield $K=T_{(q)}$. For certain cases with $^*$, see Theorem \ref{thmK}. }
\begin{tabular}{cc|l} \hline
$\phi(n)$ & $n$ & $\ell\overset{\phantom{*}}{\lrcorner}P_n$ \\
 \hline
2  & 3  & $3\lrcorner{D_3}$ \quad ${4}\overset{\phantom{*}}{\lrcorner}{C_2}$\\
   & 4  & $2\lrcorner{D_4}$ \quad ${4}\overset{\phantom{*}}{\lrcorner}{D_2}$ \\
 \hline
4  & 5  & $5\lrcorner{D_5}$ \quad ${16}\overset{\phantom{*}}{\lrcorner}{C_2}$ \\
   & 8  & $2\lrcorner{D_8}$ \quad $4\lrcorner{D_4}$\quad ${8}\lrcorner{C_2}$ \quad ${16}\overset{\phantom{*}}{\lrcorner}{C_2}$ \\
   & 12 & $4\lrcorner{C_4}$ \quad $9\lrcorner{C_3}$ \quad ${16}\overset{\phantom{*}}{\lrcorner}{C_2}$ \\
 \hline
6  & 7  & $7\lrcorner{D_7}$\quad $8\lrcorner{C_2}$ \quad ${64}\overset{*}{\lrcorner}{C_2}$\\
   & 9  & $3\lrcorner{D_9}$ \quad $9\lrcorner{C_3}$\quad ${27}\lrcorner{C_3}$ \quad ${64}\overset{\phantom{*}}{\lrcorner}{C_2}$ \\
 \hline
8  & 15 & $16\lrcorner{C_2}$ \quad $25\lrcorner{C_5}$\quad ${81}\lrcorner{C_3}$ \quad ${256}\overset{*}{\lrcorner}{C_2}$ \\
   & 16 & $2\lrcorner{D_{16}}$ \quad $4\lrcorner{D_8}$\quad ${8}\lrcorner{C_4}$ \quad ${16}\lrcorner{C_4}$ \quad ${{32}}\lrcorner{C_2}$\quad ${{64}}\lrcorner{C_2}$\quad ${{128}}\lrcorner{C_2}$\quad ${256}\overset{\phantom{*}}{\lrcorner}{C_2}$\\
   & 20 & $5\lrcorner{C_5}$\quad ${16}\lrcorner{C_4}$ \quad $25\overset{*}{\lrcorner}{C_5}$ \quad ${256}\lrcorner{C_2}$ \\
   & 24 &  $4\lrcorner{C_8}$\quad$9\lrcorner{C_3}$\quad ${16}\lrcorner{C_4}$ \quad $81\overset{*}{\lrcorner}{C_3}$ \quad ${256}\lrcorner{C_2}$\\
 \hline
10 & 11 & $11\lrcorner{D_{11}}$ \quad ${1024}\overset{\phantom{*}}{\lrcorner}{C_2}$ \\
 \hline
12 & 13 & $13\lrcorner{D_{13}}$ \quad ${4096}\overset{\phantom{*}}{\lrcorner}{C_2}$ \\
   & 21 & $7\lrcorner{C_7}$\quad$49\overset{*}{\lrcorner}{C_7}$\quad ${64}\lrcorner{C_2}$ \quad $729\lrcorner{C_3}$ \quad ${4096}\overset{*}{\lrcorner}{C_2}$ \\
   & 28 & $8\lrcorner{C_4}$\quad ${49}\lrcorner{C_7}$ \quad $64\overset{*}{\lrcorner}{C_4}$\quad ${512}\overset{*}{\lrcorner}{C_2}$ \quad ${4096}\overset{*}{\lrcorner}{C_2}$  \\
   & 36 & $9\lrcorner{C_9}$\quad ${64}\lrcorner{C_4}$ \quad $729\lrcorner{C_3}$ \quad ${4096}\overset{\phantom{*}}{\lrcorner}{C_2}$ \\
 \hline
16 & 17 & $17\lrcorner{D_{17}}$ \quad ${256}\lrcorner{C_2}$\quad ${65536}\overset{*}{\lrcorner}{C_2}$ \\
   & 32 & ${2}\lrcorner{D_{32}}$ \quad ${4}\lrcorner{D_{16}}$ \quad
   ${8,16}\lrcorner{C_{8}}$ \quad ${{32}}\lrcorner{C_4}$\quad ${{64}}\lrcorner{C_4}$\quad ${{128}}\lrcorner{C_4}$\quad ${256}\overset{\phantom{*}}{\lrcorner}{C_4}$  \\ &&
   ${2^i}\overset{\phantom{*}}{\lrcorner}{C_2}$ ($i=9,10,11,12,13,14,15,16$) \\
   & 40 & ${16}\lrcorner{C_{8}}$ \quad ${25}\lrcorner{C_{5}}$ \quad
   ${256}\lrcorner{C_{4}}$ \quad ${625}\overset{*}{\lrcorner}{C_5}$ \quad
   ${4096}\lrcorner{C_2}$ \quad ${65536}\lrcorner{C_2}$  \\
   & 48 &${4}\lrcorner{C_{16}}$ \quad ${16}\lrcorner{C_{8}}$ \quad
   ${64}\lrcorner{C_{4}}$ \quad ${81}\lrcorner{C_3}$ \quad
   ${256}\lrcorner{C_4}$\quad
   ${6561}\overset{*}{\lrcorner}{C_3}$ \\ &&
   ${1024}\lrcorner{C_2}$ \quad ${4096}\lrcorner{C_2}$ \quad ${16384}\lrcorner{C_2}$ \quad ${65536}\overset{\phantom{*}}{\lrcorner}{C_2}$ \\
   & 60 & ${16}\lrcorner{C_{4}}$ \quad ${25}\lrcorner{C_{5}}$ \quad
   ${81}\lrcorner{C_{3}}$ \quad ${256}\overset{*}{\lrcorner}{C_4}$ \quad
   ${625}\overset{*}{\lrcorner}{C_5}$\quad ${4096}\overset{*}{\lrcorner}{C_2}$ \\ &&
   ${6561}\overset{*}{\lrcorner}{C_3}$\quad
   ${65536}\overset{*}{\lrcorner}{C_2}$ \\
 \hline
18 & 19 & $19\lrcorner{D_{19}}$ \quad ${262144}\overset{\phantom{*}}{\lrcorner}{C_2}$ \\
   & 27 & ${3}\lrcorner{D_{27}}$ \quad ${9}\lrcorner{C_{9}}$ \quad
   ${27}\lrcorner{C_{9}}$ \quad ${3^i}\lrcorner{C_3}$ ($i=4,5,6,7,8,9$) \quad
   ${262144}\overset{\phantom{*}}{\lrcorner}{C_2}$ \\
 \hline
20 & 25 & ${5}\lrcorner{D_{25}}$ \quad ${5^i}\lrcorner{C_5}$ ($i=2,3,4,5$) \quad
${1048576}\overset{\phantom{*}}{\lrcorner}{C_2}$\\
   & 33 & ${3}\lrcorner{D_{27}}$ \quad ${9}\lrcorner{C_{9}}$ \quad
   ${27}\lrcorner{C_{9}}$ \quad ${3^i}\lrcorner{C_3}$ ($i=4,5,6,7,8,9$) \quad
   ${1048576}\overset{\phantom{*}}{\lrcorner}{C_2}$ \\
   & 44 & ${121}\lrcorner{C_{11}}$ \quad ${1024}\lrcorner{C_4}$ \quad
   ${1048576}\overset{\phantom{*}}{\lrcorner}{C_2}$ \\
 \hline
24 & 35 & ${2401}\lrcorner{C_{7}}$ \quad ${4096}\lrcorner{C_{2}}$ \quad
   ${15625}\lrcorner{C_{5}}$ \quad ${16777216}\overset{*}{\lrcorner}{C_2}$ \\
   & 45 & ${81}\lrcorner{C_{9}}$ \quad ${4096}\lrcorner{C_{2}}$\quad${6561}\lrcorner{C_{3}}$ \quad
   ${15625}\lrcorner{C_{5}}$\quad${531441}\lrcorner{C_{3}}$ \quad ${16777216}\overset{*}{\lrcorner}{C_2}$ \\
   & 84 & ${49}\lrcorner{C_{7}}$\quad ${64}\lrcorner{C_{4}}$ \quad
   ${729}\lrcorner{C_{3}}$\quad${2401}\overset{*}{\lrcorner}{C_{7}}$\quad
   ${4096}\overset{*}{\lrcorner}{C_{4}}$ \\ &&${531441}\overset{*}{\lrcorner}{C_{3}}$ \quad ${262144}\overset{*}{\lrcorner}{C_2}$ \quad ${16777216}\overset{*}{\lrcorner}{C_2}$ \\
 \hline
\end{tabular}
%\vspace{2mm}
\end{table}

\end{document}